\newtheorem{lem}{Lemma}
\newtheorem{thm}{Theorem}
\newtheorem{ass}{Assumption}
\begin{document}

\title{\LARGE \bf Simultaneous Stabilization
	 of Traffic Flow on Two Connected Roads}  
				
\author{Huan Yu$^{*}$, Jean Auriol $^{\dagger}$, Miroslav Krstic$^{*}$ 
	\thanks{$^{*}$Huan Yu and Miroslav Krstic are with the Department of Mechanical and Aerospace Engineering,
		University of California, San Diego, 9500 Gilman Dr, La Jolla, CA 92093, United States.
		{\tt\small huy015@ucsd.edu, krstic@ucsd.edu}}%
	\thanks{$^{\dagger}$Jean Auriol is with the Department of Petroleum Engineering, University of Calgary, 2500 University Dr NW Calgary, Canada.
		{\tt\small jean.auriol@ucalgary.ca}}%
}
                                           
\maketitle


		\begin{abstract}                          
	In this paper we develop a boundary state feedback control law for a traffic flow network system in its most fundamental form: one incoming and one outgoing road connected by a junction.  The macroscopic traffic dynamics on each road segment are governed by Aw-Rascle-Zhang (ARZ) model, consisting of second-order nonlinear partial differential equations (PDEs) for traffic density and velocity. Different equilibrium road conditions are considered for the connected segments. For stabilization of the stop-and-go traffic congestion on the two roads, we consider a ramp metering located at the connecting junction. The traffic flow rate entering from the on-ramp to the mainline junction is actuated. The objective is to simultaneously stabilize the upstream and downstream traffic to a given spatially-uniform constant steady-state. We design a full state feedback control law for this under-actuated network of two systems of two hetero-directional linear first-order hyperbolic PDEs interconnected through the boundary condition (junction). The exponential stability is validated by numerical simulation. 

		\end{abstract}

\section{Introduction}

Freeway traffic modeling and management has been intensively investigated over the past decades. Motivations behind are to understand the formation of traffic congestion, and further to prevent or suppress instabilities of traffic flow. Macroscopic modeling of traffic dynamics is used to describe the evolution of aggregated traffic state values on road. Traffic dynamics are governed by hyperbolic PDEs. The most widely-used macroscopic traffic PDE models include the first-order Ligthill-Whitham-Richards (LWR) model and the second-order Aw-Rascle-Zhang (ARZ) model~\cite{AW:00}~\cite{Zhang02}. The LWR model corresponds to a conservation law of the traffic density. It can predict the formation and propagation of traffic shockwaves on freeway, but fails to describe the stop-and-go phenomenon~\cite{Daganzo95}~\cite{Klar00}~\cite{Seibold:09}. The oscillations of densities and velocities travel with traffic stream, cause unsafe driving conditions, increased consumptions of fuel and delay of travel time. The second-order ARZ traffic model is developed to describe this common phenomenon. It consists of a set of nonlinear hyperbolic PDEs describing the evolution of the  traffic density and velocity. More recently, the macroscopic modeling of road networks based on the ARZ model has been developed in \cite{Garavello16}~\cite{Herty06} which we consider to use as the system model in this work.

Traffic control strategies are developed and implemented for the traffic management infrastructures, including ramp metering and varying speed limits. Boundary feedback control algorithms are studied for traffic regulation on a freeway segment in  \cite{Bastin16}~\cite{Belletti15}~\cite{Karafyllis19}~\cite{Karafyllis18}~\cite{Yu:18}~\cite{Yu:19}~\cite{LZhang:18}~\cite{LZhang:19}. In authors' previous work \cite{Yu:18}-\cite{Yu:19}, backstepping boundary control laws for ramp metering are designed to suppress the stop-and-go traffic oscillations on freeway either upstream or downstream of the ramp.
In Fig~\ref{ramp}, the traffic flux is actuated through the traffic lights located on the ramp so that the upstream traffic in domain $\mathcal{U}$ or the downstream traffic in domain $\mathcal{D}$ can be controlled. However, the upstream and the downstream traffic can not be stabilized simultaneously and this control design does not consider distinct traffic scenarios for both traffic segments. In this paper, we aim to solve two questions that remained unanswered in the previous work by reformulating the problem in the network setting and providing a more applicable control design approach. The first question is how the ramp metering control of the upstream traffic affects the downstream traffic. The second question is how we can control the downstream and the upstream traffic simultaneously. Furthermore, the control of this basic system of two connected freeway segments will be a important milestone before considering the control problem of the macroscopic modeling of general traffic road network.

\begin{figure}[t]
	\begin{center}
		\includegraphics[height=3.5cm]{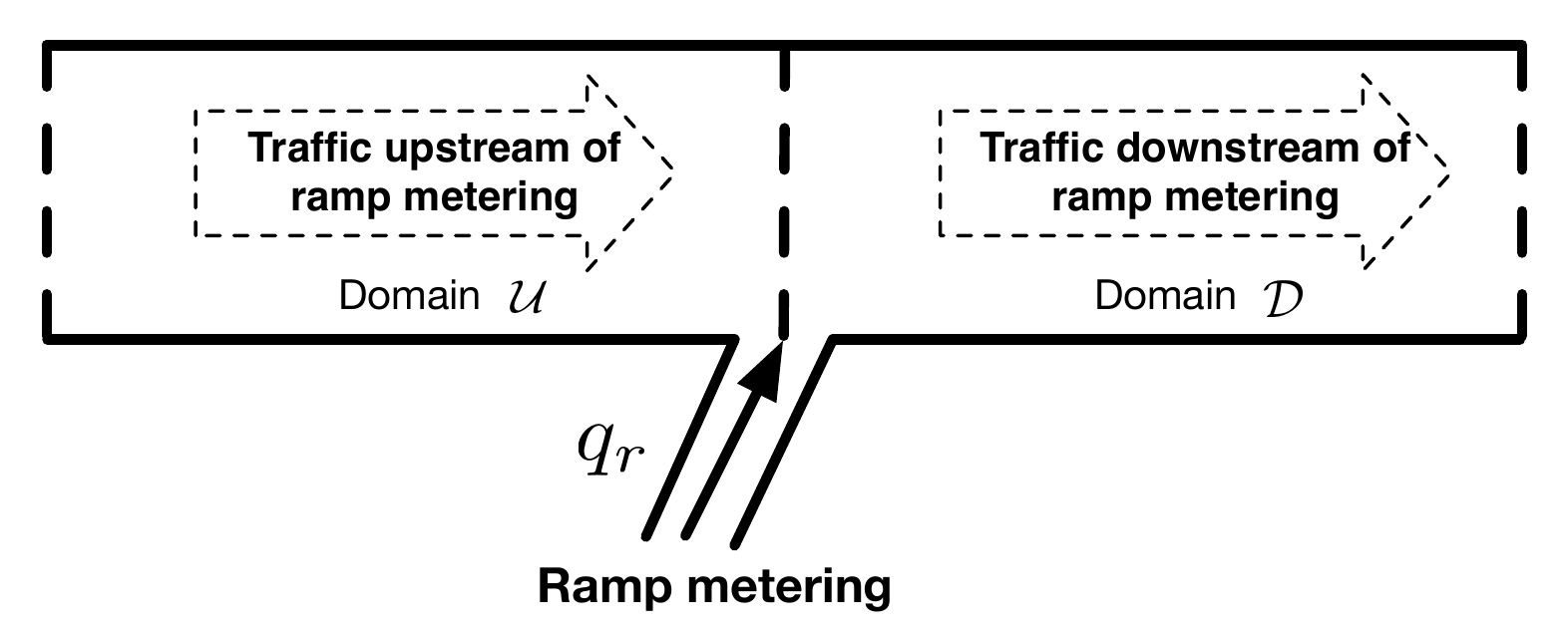}    
		\caption{Boundary feedback control of freeway traffic through ramp metering, the upstream freeway traffic and the downstream traffic of the ramp are simultaneously stabilized.}  
		\label{ramp}                                 
	\end{center}                                  
\end{figure}

In this work, we adopt the traffic network modeling proposed in \cite{Herty06}. The modeling of the junction of two connected roads conserves the mass and the other traffic property as detailed later in the paper. This property is not smooth across the junction in \cite{Garavello16}. In comparison, the solution in \cite{Herty06} is a weak solution (in the sense of the conservative variables of the ARZ model) that guarantees the well-posedness of the closed-loop system for our control design. The considered system of two connected freeway segments can then be rewritten as a network of two interconnected PDE systems coupled through their boundaries. Each subsystem corresponds to a $2\times 2$ coupled hyperbolic systems. Despite the fact that numerous theoretical results in the literature are focused on the boundary control of this class of hyperbolic system based on the backstepping approach~\cite{Anfinsen}~\cite{auriol16}~\cite{coron2013local}~\cite{Deutscher17}~\cite{Vazquez}~\cite{Yu:17}, the control of the network of PDEs remains a challenging research topic. This is due to the fact that in most cases, these systems are underactuated (only the PDE located at one extremity of the network can be actuated). To tackle this problem, multiple approaches have been proposed: PI boundary controllers~\cite{bastin2013exponential,bastin2015stability}, flatness based design of feedforward control laws~\cite{schmuck2011flatness,schmuck2014feed} and more recently backstepping-based control laws~\cite{auriol2019network}.

The main contribution of this paper is to provide an explicit control design that simultaneously stabilizes the traffic flow on two connected road. 
The paper is organized as follows. In Section II, we introduce the system under consideration. In particular, we give the PDEs describing the dynamics of the traffic density and velocity. These equations are then linearized around a given steady-state. A stabilizing state-feedback control law is obtained in Section III for this underactuated system using a backstepping approach. Some simulation results are presented in Section IV. Finally, some concluding remarks are given in Section V.

\section{Problem Statement}

\begin{figure}[t]
\begin{center}
	\includegraphics[height=3.5cm]{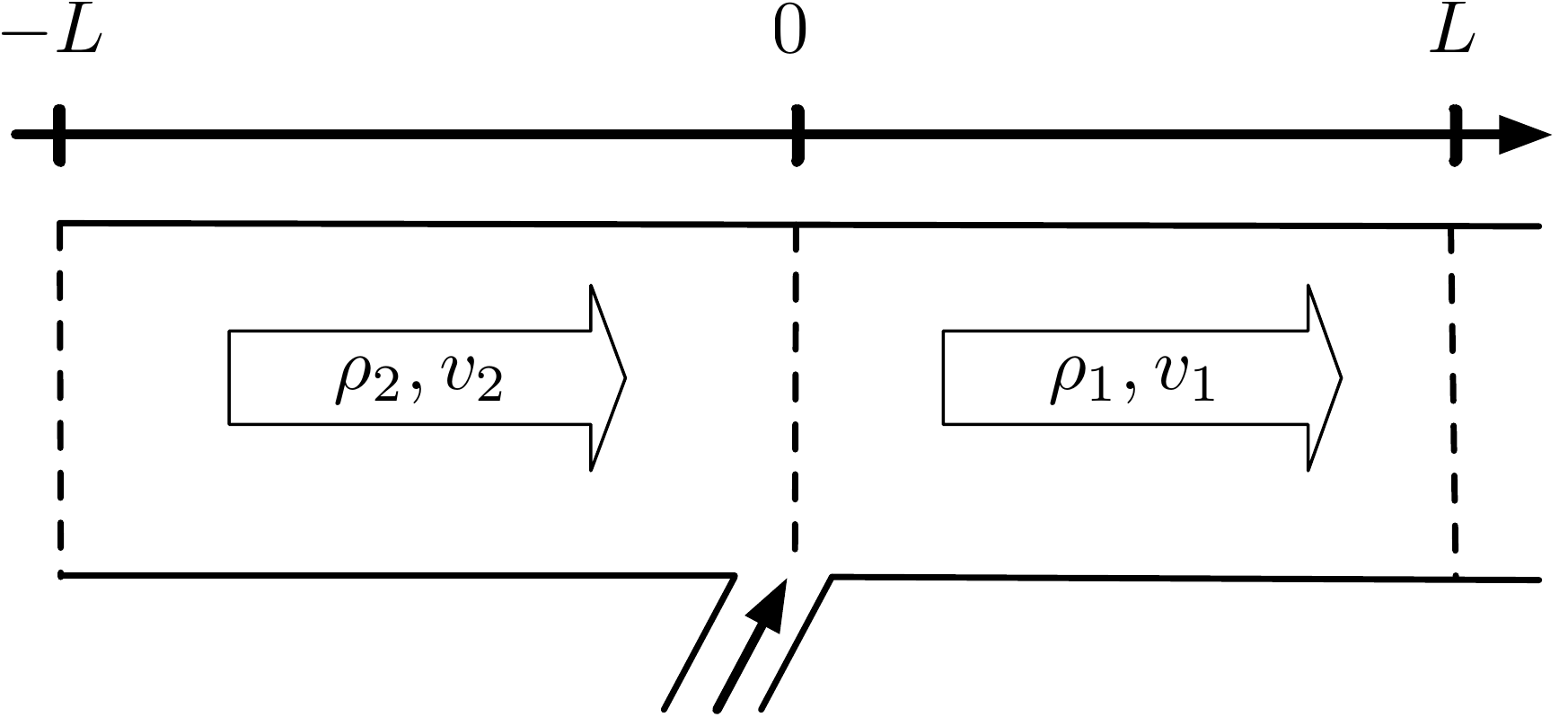}    
	\caption{Traffic flow on an incoming road and an outgoing road connected with a junction, actuation is implemented at the junction.}  
	\label{midinput}                                 
\end{center}                                  
\end{figure}

We consider a road network that contains two connected road segments with unidirectional traffic flow. The road conditions and properties are different for the two segment. The two segments are assumed to have the same length $L$ for simplicity. For segments with different length, we rescale the equations describing the traffic system and the control design we propose can be directly applied. The outgoing road segment is defined on $[0,L]$ while the incoming road segment is defined on $[-L,0]$. These two segments are connected at the junction through the boundary $x=0$. The traffic dynamics of road segments are described with PDE models and the junction between the two segments implies boundary conditions for the PDE model. This allows the existence of weak solutions for the traffic network problem~\cite{Herty06}.

\subsection{ARZ PDE model}
The evolution of traffic density $\rho_1(x,t)$ and velocity $v_1(x,t)$ (with $(x,t) \in [0,L]\times [0, \infty)$) on the outgoing road segment and traffic density  $\rho_2(x,t)$ and velocity $v_2(x,t)$  ($(x,t) \in [-L,0]\times [0, \infty)$) on the incoming road segment are modeled by the following ARZ model.

\begin{align}
	\partial_t \rho_i + \partial_x (\rho_i v_i) =& 0, \label{PDE_ARZ_1}\\
	\partial_t (\rho_i ( v_i + p_i(\rho_i))) + \partial_x (\rho_i v_i ( v_i + p_i(\rho_i))) =& -\frac{\rho_i(v_i -V(\rho_i)))}{\tau_i}, \label{PDE_ARZ_2}
\end{align} 
where $i \in \{1,2\}$ represents either the outgoing road segment or the incoming road segment. The traffic pressure $p_i(\rho_i)$ is defined as an increasing function of the density 
\begin{align}
p_i(\rho_i) = c_i \rho^{\gamma_i}_i,\label{prho}
\end{align} 
where $\gamma_i,c_i \in \mathbb{R^+}$ is defined as
\begin{align}
c_i = \frac{v_m}{\rho_{m,i}^{\gamma_i}}.\label{pc}
\end{align}
The coefficient $\gamma_i$ represents the overall drivers' aggressiveness, the positive constant $v_m$ represents the maximum velocity and the positive constant $\rho_{m,i}$ is the maximum density defined as the number of vehicles per unit length.
The equilibrium density-velocity relation $V_i(\rho_i)$ is given in the form of Greenshield's model 
\begin{align}
V_i(\rho_i)=v_m\left(1-\left(\frac{\rho_i}{\rho_{m,i}}\right)^{\gamma_i}\right). \label{vf}
\end{align}
Given the definitions of \eqref{prho},\eqref{pc} and \eqref{vf}, the following relation between $V_i(\rho_i)$ and $p_i(\rho_i)$ is satisfied on both segments.
\begin{align}
V(\rho_i) + p_i(\rho_i) = v_m, \label{vm}
\end{align}
where the marginal stability is satisfied for each segment. The linear stability analysis of the system can be found in \cite{Yu:19}. We define the following variable
\begin{align}
	w_i= v_i + p_i(\rho_i), \label{w}
\end{align} which is interpreted as traffic "friction" or drivers' property which transports in traffic flow with the vehicle velocity. This property represents the heterogeneity of traffic flow with respect to the equilibrium density-velocity relation $V_i(\rho_i)$. The maximum velocity $v_m$ is assumed to be the same for the two road segments while the maximum density $\rho_{m,i}$ and coefficient $\gamma_i$ are allowed to vary in the different segments. The positive constant $\tau_i$ is the relaxation time that represents the time scale for traffic velocity $v_i$ adapting to the equilibrium density-velocity relation $V_i(\rho_i)$. 

Finally, we denote the traffic flow on each road as
\begin{align}
	 q_i=\rho_iv_i.
\end{align}
The equilibrium flow and density relation, also known as the fundamental diagram, is then given by
\begin{align}
	Q_i(\rho_i) =& \rho_iV(\rho_i)
=\rho_iv_m\left(1-\left(\frac{\rho_i}{\rho_{m,i}}\right)^{\gamma_i}\right). \label{Qrho}
\end{align}

We assume that the equilibrium traffic relation is different for the two segments due to the change of road situations. The illustration is given in Fig \ref{fig:equ}. The critical density $\rho_c$ segregates the free and congested regimes of traffic states. For the fundamental diagram in \eqref{Qrho}, the critical density is given by
$\rho_{c,i}=\frac{\rho_{m,i}}{(1+\gamma_{i})^{1/\gamma_{i}}}.$
The traffic is in the free regime when the density satisfies $\rho < \rho_c$. The traffic is defined as the congested traffic when the density satisfies $\rho > \rho_c$. The traffic flux reaches its maximum value at the critical density $Q(\rho_c)$ which is also referred as the road capacity. 

In this work, we are concerned with the congested traffic and assume that the equilibrium of both segments are in the congested regime. We consider the situation that the upstream road segment for $x \in [-L,0]$ has more lanes than the downstream road segment for $x \in [0,L]$. Therefore, the maximum density $\rho_{m,2} >\rho_{m,1} $ and the maximum speed limit $v_m$ is assumed to be the same for the two segments. The upstream road capacity that is the maximum value of $Q_2(\rho_c)$ is reduced in the downstream for $Q_1(\rho_c)$, due to the change of road conditions from the segment 2 to the segment 1.  
\begin{figure}[t!]	
	\begin{subfigure}{0.23\textwidth}
		\includegraphics[width=0.9\linewidth]{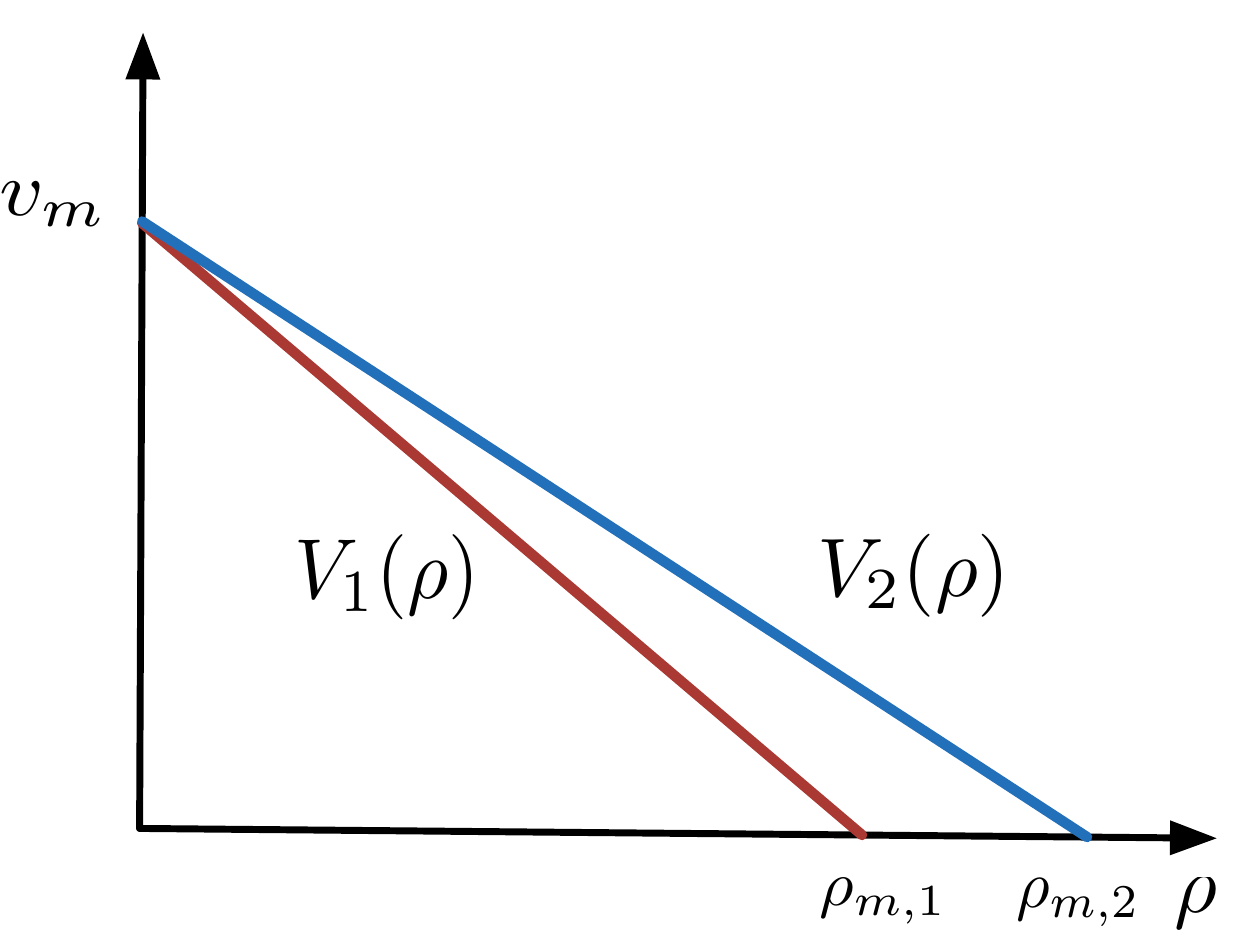} 
	\end{subfigure}
	\begin{subfigure}{0.23\textwidth}		\includegraphics[width=0.9\linewidth]{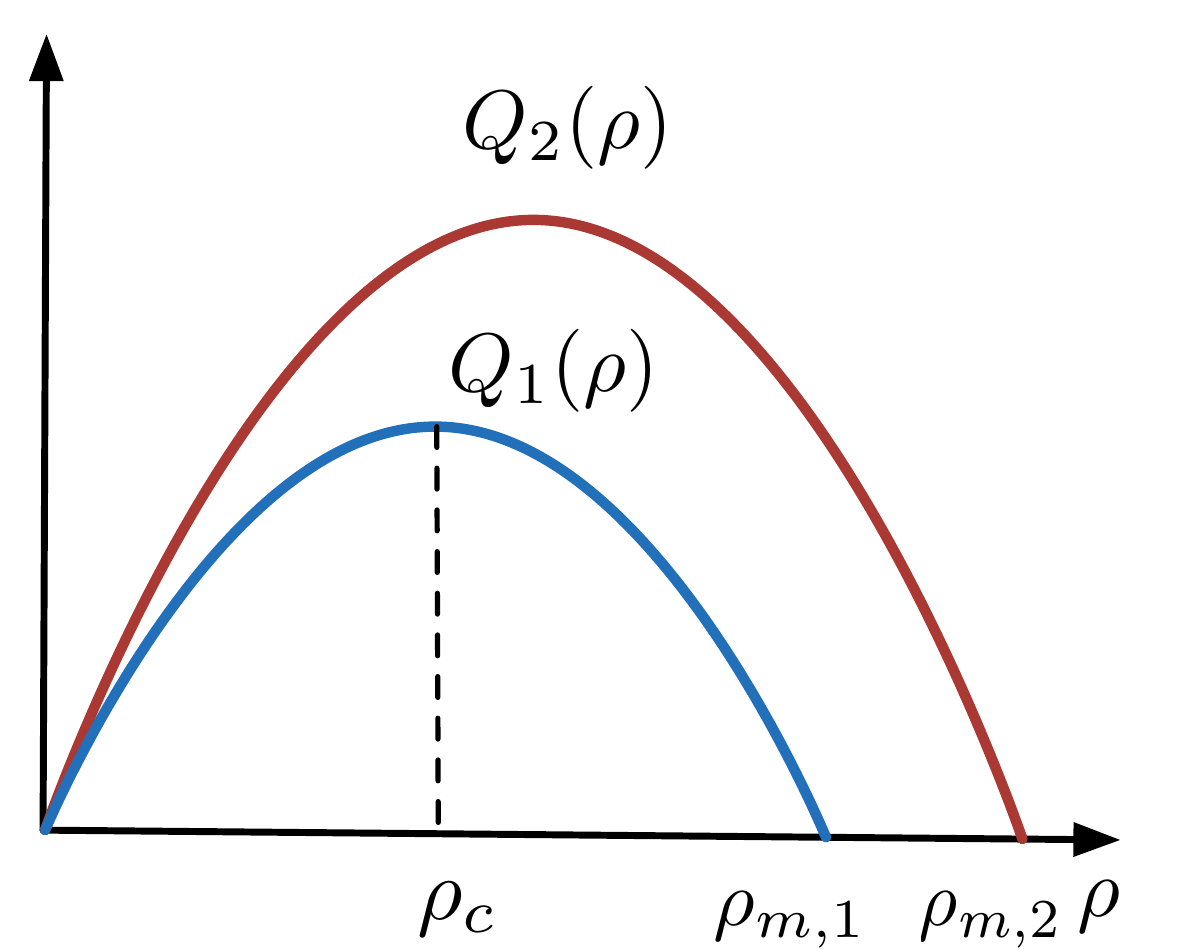}
	\end{subfigure}
	\caption{The equilibrium density and velocity relation $V_i(\rho)$ on the left, the equilibrium density and flux relation $Q_i(\rho)$ on the right}
	\label{fig:equ}
\end{figure}
\subsection{Boundary conditions}

In this paper we consider the weak solution of the network \eqref{PDE_ARZ_1}-\eqref{PDE_ARZ_2}. Regarding the boundary conditions connecting the two PDE systems, the Rankine-Hugoniot condition is satisfied at the junction. This condition implies piecewise smooth solutions and corresponds to the conservation of the mass and of the drivers' properties defined in \eqref{w} at the junction, i.e. 
\begin{align}
	\rho_1 v_1(0^-,t) = & \rho_2 v_2(0^+,t),\\
	\rho_1 v_1 w_1(0^-,t) = & \rho_2 v_2 w_2(0^+,t),	
\end{align}
Thus, we assume that the flux and drivers' property are continuous across the boundary conditions at $x=0$, that is
\begin{align}
	\rho_1(0,t) v_1(0,t) =& \rho_2(0,t) v_2(0,t), \label{bc1}\\
	 w_2(0,t) = &  w_1(0,t).\label{bc2}
\end{align}
For open-loop system, we assume a constant incoming flow $q^\star$ entering the inlet boundary $x= -L$ and a constant outgoing flow $q^\star$ at the outlet boundary for $x= L$:
\begin{align}
	q_2(-L,t) =& q^\star, \label{q_inlet}\\
	q_1(L,t)=& q^\star,\label{q_outlet}
\end{align} 
The control problem we solve consists in stabilizing the traffic flow in both the incoming and outgoing road segments around given steady-states. We consider the actuator $U_0(t)$ with the ramp metering located at the junction boundary $x=0$, controlling the traffic flow entering from the junction to the mainline road. Given the flux continuity condition, we have the following boundary condition at the junction
\begin{align}
	\rho_1(0,t) v_1(0,t) =& \rho_2(0,t) v_2(0,t) + U_0(t), \label{mid_input}
\end{align}
where the downstream segment flow consists of the incoming flow from the mainline upstream segment and the actuated traffic flow from the on-ramp.

\subsection{Steady states $(\rho_1^\star,v_1^\star,\rho_2^\star,v_2^\star)$}
The control objective is to stabilize the traffic flow in the two segments around the steady states. These arbitrary steady states $(\rho_1^\star,v_1^\star)$, $(\rho_2^\star,v_2^\star)$ are chosen such that the boundary conditions \eqref{bc1} and \eqref{bc2} are satisfied, i.e.

\begin{align}
	\rho_1^\star v_1^\star =& \rho_2^\star v_2^\star = q^\star, \label{con_q}\\
	w_1^\star =& w_2^\star = v_m,\label{con_w}
\end{align}
where the steady state velocities satisfy the equilibrium density-velocity relation $v_i^\star =V_i(\rho_i)$. The constant flux in \eqref{con_q}
\begin{align}
Q_1(\rho_1^\star) = Q_2(\rho_2^\star),
\end{align}
and the definition of $Q_i(\rho_i)$ in \eqref{Qrho} yields the following relation for the steady state densities of the two segments
\begin{align}
 \frac{\rho_1^\star - (\rho^\star_1)^{\gamma_1 +1} }{\rho_2^\star - (\rho^\star_2)^{\gamma_2 +1}} = \frac{\rho_{m,1}^{\gamma_1}}{\rho_{m,2}^{\gamma_2}}. \label{rho_relation}
\end{align}
According to \eqref{w}, the constant driver's property in \eqref{con_w} implies that we have the same maximum velocity $v_m$ for the two segments (which corresponds to our initial assumption):
\begin{align}
	v_1^\star + p_1^\star = v_2^\star + p_2^\star = v_m.
\end{align}
Note that when vehicles' property follows the equilibrium relation $v_i=V(\rho_i)$, the above relation always holds given \eqref{vm}. In summary, we first choose the steady states density $\rho_1^\star$ and $\rho_2^\star$ such that the relation in \eqref{rho_relation} is satisfied. Then the steady states velocities are obtained as $v_i^\star=V(\rho_i^\star)$.

\subsection{Linearized model in the Riemann coordinates $(\tilde w_1,\tilde v_1,\tilde w_2, \tilde v_2)$}

We linearize the ARZ based traffic network model $(\rho_i, v_i)$ in \eqref{PDE_ARZ_1}, \eqref{PDE_ARZ_2} with the boundary conditions \eqref{bc1}, \eqref{bc2}, \eqref{q_inlet}, \eqref{q_outlet} around the steady states $(\rho_i^\star,v_i^\star)$ defined in the previous section. In order to obtain a simpled model for control design, the linearized model is given in the following Riemann variables defined as  
\begin{align}
\tilde w_i =&\frac{\gamma_i p_i^\star}{q^\star} (\rho_i v_i - \rho_i^\star v_i^\star) + \frac{1}{r_i}( v_i - v_i^\star),\\
\tilde v_i =& v_i - v_i^\star, \label{vtv}
\end{align}
where the constant coefficients $r_i$ are defined as
\begin{align}
r_i = - \frac{v_i^\star}{\gamma_i p_i^\star-v_i^\star}. \label{gamma}
\end{align}
Given the controlled boundary at $x=0$ in \eqref{mid_input} and steady states condition in \eqref{con_q} ,
\begin{align}
	\tilde q_1(0,t) = \tilde  q_2(0,t) + U_0(t). 
\end{align}

Then we obtain the linearized model with boundary conditions 
\begin{align}
\partial_t \tilde w_i +v_i^\star \partial_x\tilde w_i&=-\frac{1}{\tau_i} \tilde w_i ,\label{eq_syst_eq_1}\\
\partial_t \tilde v_i - (\gamma_i  p_i ^\star - v_i ^\star )   \partial_x \tilde v_i &=-\frac{1}{\tau_i } \tilde w_i ,\\ 
\tilde v_1 (L,t) &= r_1\tilde w_1(L,t),\\
   \tilde w_1(0,t) &= \tilde w_2(0,t) ,\\	
    \tilde w_2(-L,t) &= \frac{1}{r_2}\tilde v_2(-L,t),\\
  \notag    \tilde v_2(0,t) &= \frac{r_2}{r_1}\tilde v_1(0,t) + \frac{r_1-r_2}{1-r_1} \tilde w_2(0,t)  \\
      & +\frac{v_2^\star({1-r_2} )}{q^\star} U_0(t), \label{eq_syst_bound_1}
\end{align}
Detailed calculations regarding the linearization can be obtained following \cite{Yu:19}.

For the congested regime of traffic flow, $\rho_i^\star > \frac{\rho_{m,i}}{(1+\gamma_i)^{1/\gamma_i}}$ is satisfied so that the characteristic speed $\gamma_i  p_i ^\star - v_i ^\star > 0$. The velocity variations $\tilde v_i(x,t)$ transport upstream which means the action of velocity acceleration or deceleration is repeated from the leading vehicle to the following vehicle.  
The following inequality is satisfied for the characteristic speeds ratio defined in \eqref{gamma},
\begin{align}
0<r_i<1.
\end{align}
The more congested of the traffic, the smaller of the ratio constant $r_i$. The control diagram for the closed-loop system \eqref{eq_syst_eq_1}-\eqref{eq_syst_bound_1} is given in Fig. \ref{diagram}.
\begin{figure}[t]
\begin{center}
	\includegraphics[height=4cm]{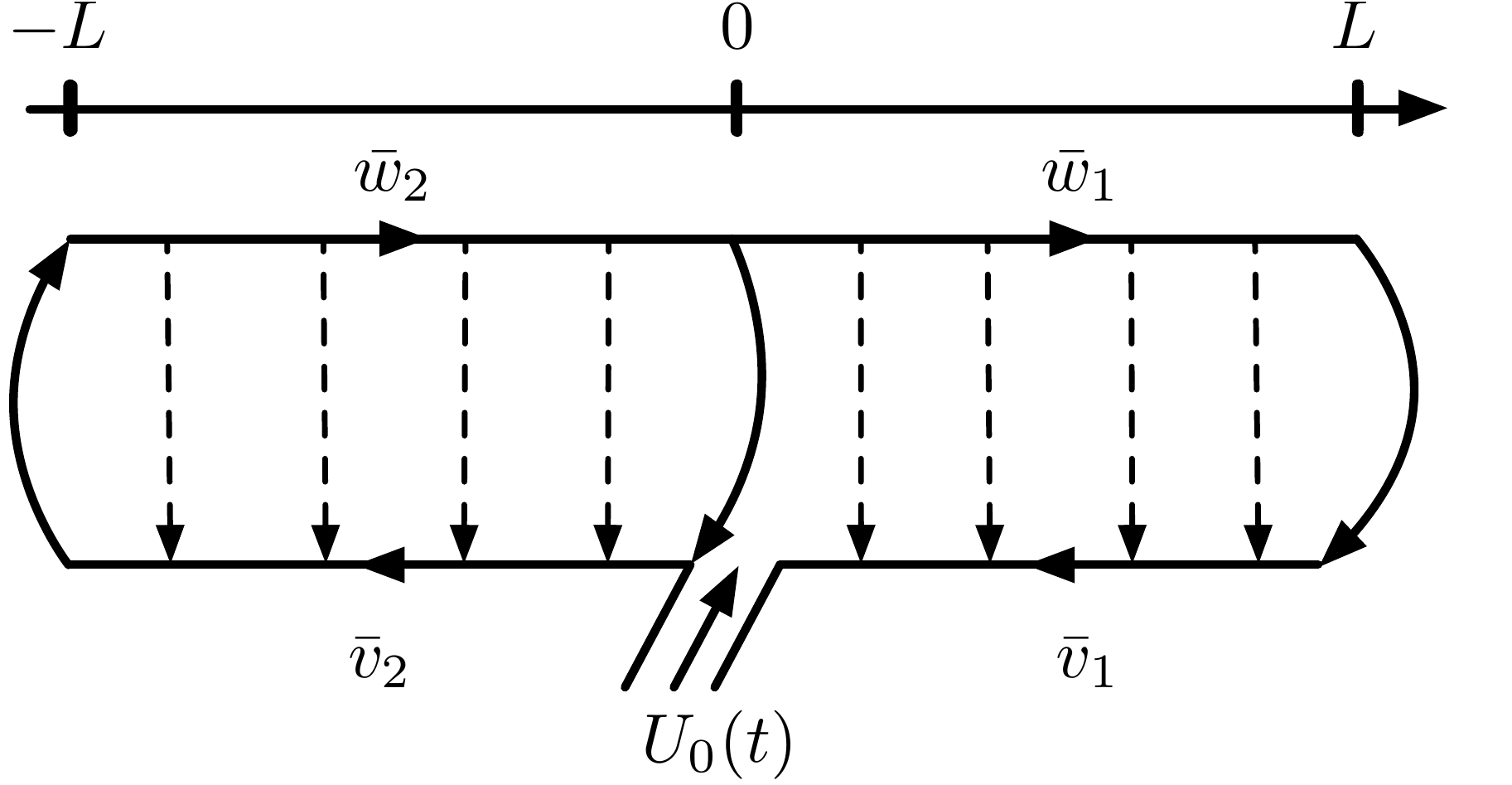}    
	\caption{Control diagram for the closed-loop system}  
	\label{diagram}                                 
\end{center}                                  
\end{figure}

Using a spatial transformation, we get rid of the diagonal terms $-\frac{1}{\tau_i} \tilde w_i $ that appear in the two equations describing the evolution of the state $w_i$. More precisely, we define for all $x \in [-L,0]$ and all $t >0$ the state $\bar{w}_2$ as
\begin{align}
	\bar w_2(x,t)= \exp\left(\frac{x}{\tau_2 v_2^\star}\right) \tilde w_2(x,t).
\end{align}
Similarly, the state $\bar{w}_1$ is defined for all $x \in [0,L]$ and all $t >0$ by
 \begin{align}
	\bar w_1(x,t)= \exp\left(\frac{x}{\tau_1 v_1^\star}\right) \tilde w_1(x,t).
\end{align}
One can easily check that with such a change of variable, the system \eqref{eq_syst_eq_1}-\eqref{eq_syst_bound_1} rewrites (for $i \in \{1,2\}$) as
\begin{align}
	\partial_t \bar w_i +v_i^\star \partial_x \bar w_i&=0 , \label{orig_1}\\
\partial_t \tilde  v_i - (\gamma_i  p_i ^\star - v_i ^\star )   \partial_x \tilde v_i &=c_i(x)\bar  w_i ,  \label{orig_2}\\ 
\tilde v_1 (L,t)&= r_1  \exp\left(-\frac{L}{\tau_1 v_1^\star}\right) \bar w_1(L,t),\\
\bar w_1(0,t) &= \bar w_2(0,t) ,\\	
\bar w_2(-L,t) &= \exp\left(\frac{-L}{\tau_2 v_2^\star}\right)  \frac{1}{r_2}\bar v_2(-L,t),\\
\notag \tilde v_2(0,t) &= \frac{r_2}{r_1}\tilde v_1(0,t) + \frac{r_1-r_2}{1-r_1} \bar w_2(0,t) \\&+ \frac{v_2^\star({1-r_2} )}{q^\star} U_0(t), \label{orig_end}
\end{align}
where the spatially varying coefficients $c_i(x)$ are defined as
$
	c_i(x) = -\frac{1}{\tau_i}\exp\left(-\frac{x}{\tau_i v_i^\star}\right).
$
The corresponding initial condition are denoted~$(\tilde{v}_0)_i=\tilde{v}_i(\cdot,0)$ and~$(\bar{w}_0)_i=\bar{v}_i(\cdot,0)$. The objective is to design the control law~$U_0$ to stabilize the system~\eqref{orig_1}-\eqref{orig_end} in the sense of the $L^2$-norm. Such an interconnected system has already been considered in~\cite{auriol2019network} in the case of an actuator located at one of the extremity of the network. It has been proved in \cite{logemann1996conditions} that a system can be delay-robustly stabilized only if its open-loop transfer function has a finite number of zeros on the complex right half plane. For the considered class of linear hyperbolic system, it has been proved in \cite{auriol2019mapping} that such a condition is equivalent to requiring \eqref{orig_1}-\eqref{orig_end} with zero in-domain couplings (i.e. $c_1 \equiv c_2 \equiv 0$) to be exponentially stable in open-loop. Necessary and sufficient stability conditions to guarantee such an open-loop stability can be obtained by writing the corresponding characteristic equations. However for the case of system \eqref{orig_1}-\eqref{orig_end}, a simpler condition has been given in~\cite{coron2008dissipative} in the form of the following Assumption.
\begin{ass} \label{Assumption_main}
The boundary couplings of the system \eqref{orig_1}-\eqref{orig_end} are such that
\begin{align}
Sp_1\begin{pmatrix}
	0&0&0&1 \\
	0&0& \frac{r_2}{r_1}&\frac{r_1-r_2}{1-r_1}\\
	 \exp\left(\frac{-L}{\tau_1 v_1^\star}\right)  \frac{1}{r_2} &0&0&0\\
	0& r_1\exp\left(\frac{-L}{\tau_2 v_2^\star}\right) &0&0
\end{pmatrix}<1, \label{radius}
\end{align}
where for a matrix $H\in \mathcal{M}_{4,4}(\mathbb{R})$ (with~$\mathcal{M}_{r,s}(\mathbb{R})$ denoting the set of real matrices with~$r$ rows and~$s$ columns) we have
\begin{align*}
\text{Sp}_1(H)&= \text{Inf}\{ || \Delta H \Delta^{-1}||;~\Delta \in \mathcal{D}_{4,+} \},
\end{align*}
with~$\mathcal{D}_{4,+}~$ denoting the set of~$4\times 4$ real diagonal matrices with strictly positive diagonal elements. 
\end{ass}
The condition of spectral radius $Sp_1$ in \eqref{radius} leads us to the following inequality 
\begin{align}
	\sqrt{\frac{a + \sqrt{a^2 + 4b }}{2}} <1, \label {condition}
\end{align}
where $a, b \in \mathbb{R}^+$ are defined as
\begin{align}
a = & \frac{r_1-r_2}{1-r_1}r_1\exp\left(\frac{-L}{\tau_2^\star v_2^{\star}}\right),\\
b = & \exp\left(-\frac{L}{\tau_2^\star v_2^{\star}}-\frac{L}{\tau_1^\star v_1^{\star}}\right).
\end{align}
Here we assume that $r_1 > r_2$ is satisfied which corresponds to the fact that downstream traffic in segment 1 is more congested than the upstream traffic in segment 2. Note that this condition is usually satisfied for the class of traffic networks considered in this paper. The condition in \eqref{condition} becomes the following and it is guaranteed 
\begin{align}
	\exp\left(-\frac{L}{\tau^\star v^{\star}}\right) < 1.
\end{align}
if we consider the traffic conditions in the two segments to be the same $r_1 = r_2$.

\section{State feedback Control Design}
In this section we design a full-state feedback law that guarantees the stabilization of the system \eqref{eq_syst_eq_1}-\eqref{eq_syst_bound_1}. Our approach is based on the backstepping methodology. Using two Volterra transformations we map the original underactuated system to a target system for which the in-domain coupling terms $c_1$ and $c_2$ have been moved at the actuated boundary in the form of integral couplings. We can then use the actuation $U_0(t)$ to eliminate these terms, the resulting system being exponentially stable due to Assumption~\ref{Assumption_main}. As such a control law does not modify the boundary couplings, robustness margins are preserved (see~\cite{auriol2018delay,auriol2019mapping} for details). 
\subsection{Feedback law with flow control input from~$x=0$}
We consider the following backstepping transformations
\begin{align}
\alpha_i(x,t)=&\bar{w}_i(x,t), \\
\notag\beta_1(x,t)=&\tilde{v}_1(x,t)-\int_{x}^L K_1^{vw}(x,\xi)\bar{w}_1(\xi,t) d\xi\\&-\int_{x}^L K_1^{vv}(x,\xi)\tilde{v}_1(\xi,t) d\xi,\label{Back_con0_1}\\
\notag\beta_2(x,t)=&\tilde{v}_2(x,t)-\int_{-L}^x K_2^{vw}(x,\xi)\bar{w}_2(\xi,t) d\xi\\&-\int_{-L}^x K_2^{vv}(x,\xi)\tilde{v}_2(\xi,t) d\xi, \label{Back_con0_2}
\end{align}
where the kernels~$K_1^{vw}$ and~$K_1^{vv}$ are $L^\infty$ functions defined on the set~$\mathcal{T}_1=\{(x,\xi)~\in~[0,L]^2,~\xi \geq x\}$, while the kernels~$K_2^{vw}$ and~$K_2^{vv}$ are $L^\infty$ functions defined on the set~$\mathcal{T}_2=\{(x,\xi)~\in~[-L,0]^2,~\xi \leq x\}$. On their corresponding domains of definition, they satisfy the following set of PDEs:
\begin{align}
(\gamma_ip_i^\star -v_i^\star)\partial_x K_i^{vw}-v_i^\star\partial_\xi K_i^{vw}&=c_i(\xi)K_i^{vv}, \label{eq_kernel_1} \\
\partial_x K_i^{vv}(x,\xi) +\partial_\xi K_i^{vv}(x,\xi)&=0, 
\end{align}
along with the following boundary conditions
\begin{align}
&K_1^{vw}(x,x)=\frac{c_1(x)}{\gamma_1p_1^\star}, \quad K_2^{vw}(x,x)=-\frac{c_2(x)}{\gamma_2p_2^\star},\\
&K_1^{vv}(x,L)=-\exp\left(\frac{L}{\tau_1 v_1^\star}\right) K_1^{vw}(x,L), \\
&K_2^{vv}(x,-L)=-\exp\left(\frac{-L}{\tau_2 v_2^\star}\right)  K_2^{vw}(x,-L).\label{eq_kernel_final}
\end{align}
The well-posedness of this kernel PDE-system is guaranteed by the following lemma.
\begin{lem}\cite{coron2013local} Consider system~\eqref{eq_kernel_1}-\eqref{eq_kernel_final}. There exists a unique solution~$K_1^{vw}$,~$K_1^{vv}$ in~$L^\infty(\mathcal{T}_1)$ and~$K_2^{vw}$,~$K_2^{vv}$ in~$L^\infty(\mathcal{T}_2)$.
\end{lem}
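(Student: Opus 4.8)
The plan is to convert the kernel equations into an equivalent system of coupled integral equations along the characteristic curves and then establish existence and uniqueness of a bounded solution by the method of successive approximations, following the line of argument of~\cite{coron2013local}. Since the equations governing $(K_1^{vw},K_1^{vv})$ on $\mathcal{T}_1$ and those governing $(K_2^{vw},K_2^{vv})$ on $\mathcal{T}_2$ are completely decoupled from one another, it suffices to treat a single index $i$. I would carry out the construction on $\mathcal{T}_1$ and recover the statement on $\mathcal{T}_2$ by the symmetric argument, reflecting $x\mapsto -x$ and exchanging the role of the boundary $\xi=L$ with $\xi=-L$.

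First I would integrate the two transport operators along their characteristics. The equation $\partial_x K_i^{vv}+\partial_\xi K_i^{vv}=0$ forces $K_i^{vv}$ to be constant along the lines $x-\xi=\mathrm{const}$; tracing each such line back to the edge $\xi=L$ and using the boundary condition \eqref{eq_kernel_final} expresses $K_1^{vv}(x,\xi)$ as a fixed multiple of $K_1^{vw}(x-\xi+L,L)$. For the second operator, the characteristic equation $d\xi/dx=-v_i^\star/(\gamma_i p_i^\star-v_i^\star)=r_i$ shows that the characteristics of $K_i^{vw}$ are straight lines of slope $r_i$ emanating from the diagonal $\xi=x$, along which $K_1^{vw}(x,x)=c_1(x)/(\gamma_1 p_1^\star)$ is prescribed. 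Integrating \eqref{eq_kernel_1} along these lines and substituting the expression just obtained for $K_1^{vv}$ yields a pair of Volterra-type integral equations in which each kernel is written as its bounded boundary datum plus integrals of $c_i$ against the unknown kernels over sub-triangles of $\mathcal{T}_1$. Here one must check that all characteristics used stay inside the triangular domain, which holds precisely because $0<r_i<1$.

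With the integral reformulation in hand I would set up a Picard iteration: starting from the boundary data as the zeroth iterate, each step applies the Volterra integral operator, and the successive increments are estimated in $L^\infty(\mathcal{T}_1)$. The key point is that, because the integral operators act over the triangular domain and accumulate one extra integration at each step, the $n$-th increment is bounded by $(ML)^n/n!$ for a constant $M$ controlled by $\|c_i\|_\infty$ and the boundary gains; the associated series therefore converges absolutely and uniformly. Its limit is a fixed point of the integral system, hence an $L^\infty$ solution of the kernel equations, and uniqueness follows by applying the same estimate to the difference of two candidate solutions.

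I expect the main obstacle to be the bookkeeping of the boundary couplings rather than the analysis itself: condition \eqref{eq_kernel_final} ties $K_i^{vv}$ on the edge $\xi=\pm L$ back to $K_i^{vw}$, so the two integral equations are genuinely coupled, and one must verify that the Volterra structure survives this substitution. Once I confirm that each application of the operator still integrates over a strictly smaller portion of $\mathcal{T}_i$, so that no closed characteristic loop destroys the factorial decay, the bound $(ML)^n/n!$ and the convergence of the successive approximations go through, yielding the unique $L^\infty$ solution asserted by the lemma.
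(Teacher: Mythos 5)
Your proposal is correct and follows essentially the same route as the paper, which gives no proof of its own but defers to \cite{coron2013local}, whose argument is precisely your reduction to Volterra-type integral equations along the characteristics followed by successive approximations with the $(ML)^n/n!$ bound and a difference estimate for uniqueness. One small quibble: the fact that the characteristics of $K_i^{vw}$ reach the diagonal data without leaving $\mathcal{T}_i$ follows from the signs of the characteristic speeds in the congested regime (e.g.\ $\gamma_1 p_1^\star - v_1^\star>0$ and $v_1^\star>0$ give $d\xi/dx<0$ on $\mathcal{T}_1$), not from the bound $0<r_i<1$ as you state --- a slip partly inherited from the paper's own sign inconsistency in the definition of $r_i$.
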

The transformation~\eqref{Back_con0_1}-\eqref{Back_con0_2} maps the original system~\eqref{orig_1}-\eqref{orig_end} to the decoupled target system
\begin{align}
	\partial_t \alpha_i +v_i^\star \partial_x \alpha_i=&0 , \label{targ_1}\\
\partial_t \beta_i - (\gamma_i  p_i ^\star - v_i ^\star )   \partial_x \beta_i =&0 , \label{targ_beta}\\ 
\beta_1 (L,t)=& r_1  \exp\left(-\frac{L}{\tau_1 v_1^\star}\right)  \alpha_1(L,t),\\
\alpha_1(0,t) =& \alpha_2(0,t) ,\\	
\alpha_2(-L,t) =& \exp\left(\frac{-L}{\tau_2 v_2^\star}\right)  \frac{1}{r_2}\beta_2(-L,t),\label{targ_a_b_2} \\
\beta_2(0,t) =& \frac{r_2}{r_1}\beta_1(0,t) + \frac{r_1-r_2}{1-r_1} \alpha_2(0,t). \label{targ_end}
\end{align}
The controlled boundary \eqref{targ_end} is obtained by defining the control input~$U_0(t)$ as
\begin{align}
&U_0(t)=\frac{q^\star}{v_2^\star (1-r_2)}\Bigg(\!\!\!\int_{-L}^0 \!\!\! K_2^{vw}(0,\xi)\bar{w}_2(\xi,t)+K_2^{vv}(0,\xi)\tilde{v}_2(\xi,t) d\xi \nonumber \\
&-\frac{r_2}{r_1}\int_0^L K_1^{vw}(0,\xi)\bar{w}_1(\xi,t)+K_1^{vv}(0,\xi)\tilde{v}_1(\xi,t) d\xi\Bigg). \label{state-feedback}
\end{align}
We have the following theorem.
\begin{thm}
Consider the PDE system \eqref{orig_1}-\eqref{orig_end} with the feedback law $U_0$ defined in \eqref{state-feedback}. Then for any $L^2$ initial condition $(\bar{w}_i(0,\cdot), \tilde{v}_i(0,\cdot))$ the system \eqref{orig_1}-\eqref{orig_end} exponentially converges to 0. 
\end{thm}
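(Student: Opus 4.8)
The plan is to follow the backstepping route that the construction above has already laid out: use the Volterra transformation \eqref{Back_con0_1}--\eqref{Back_con0_2} to map the underactuated system \eqref{orig_1}--\eqref{orig_end} onto the decoupled target system \eqref{targ_1}--\eqref{targ_end}, establish that this target system is exponentially stable in $L^2$ under Assumption~\ref{Assumption_main}, and then transfer the stability back to the original variables using the boundedness of the transformation and of its inverse.

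First I would verify that \eqref{Back_con0_1}--\eqref{Back_con0_2} is genuinely a state transformation from \eqref{orig_1}--\eqref{orig_end} to \eqref{targ_1}--\eqref{targ_end}. Since $\alpha_i=\bar w_i$, the transport equations \eqref{targ_1} are immediate. For the $\beta_i$ equations I would differentiate \eqref{Back_con0_1}--\eqref{Back_con0_2} in $t$ and in $x$, substitute the dynamics \eqref{orig_1}--\eqref{orig_2}, and integrate by parts the Volterra integrals. Collecting the in-domain terms forces the kernels to satisfy the transport equations \eqref{eq_kernel_1}, whose purpose is exactly to absorb the couplings $c_i(x)\bar w_i$; the boundary terms produced by the integration by parts then impose the diagonal conditions on $K_i^{vw}(x,x)$ and the outer-boundary relations \eqref{eq_kernel_final} between $K_i^{vv}$ and $K_i^{vw}$. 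The existence and $L^\infty$-regularity of kernels solving this system is precisely the content of the kernel well-posedness lemma. The only delicate boundary is the actuated one: evaluating the transformation at $x=0$, substituting into \eqref{orig_end}, and inserting the explicit feedback \eqref{state-feedback} should cancel the two boundary integrals and leave exactly the static coupling \eqref{targ_end}. This algebraic cancellation, together with the integration-by-parts bookkeeping, is the step I expect to be most error-prone, though it is conceptually routine.

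Next I would argue invertibility. Because the kernels are bounded Volterra kernels on the triangular domains $\mathcal{T}_1$ and $\mathcal{T}_2$, the maps $(\bar w_i,\tilde v_i)\mapsto(\alpha_i,\beta_i)$ are bounded and boundedly invertible on $L^2$, the inverse being again a Volterra transformation with bounded kernels obtained by a successive-approximation (Neumann series) argument. This yields the norm equivalence
\begin{align*}
\underline{m}\,\big\|(\bar w_i,\tilde v_i)\big\|_{L^2}\le \big\|(\alpha_i,\beta_i)\big\|_{L^2}\le \overline{m}\,\big\|(\bar w_i,\tilde v_i)\big\|_{L^2},
\end{align*}
for positive constants $\underline m,\overline m$, so that exponential decay of $(\alpha_i,\beta_i)$ is equivalent to exponential decay of $(\bar w_i,\tilde v_i)$.

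Finally I would establish exponential stability of the target system \eqref{targ_1}--\eqref{targ_end}. With the in-domain couplings removed, the four states obey pure transport equations, and along the characteristics the dynamics reduce to a delay-difference equation for the boundary traces governed by the constant boundary coupling matrix appearing in \eqref{radius}. Assumption~\ref{Assumption_main}, i.e. $\mathrm{Sp}_1(\cdot)<1$, is exactly the dissipative boundary condition of \cite{coron2008dissipative}: it furnishes a diagonal weighting $\Delta\in\mathcal{D}_{4,+}$ from which one reads off positive weights for a Lyapunov functional of the form $V=\sum_i\int (A_i e^{-\mu x}\alpha_i^2 + B_i e^{\mu x}\beta_i^2)\,dx$ satisfying $\dot V\le -\nu V$ for some $\nu>0$, giving $L^2$ exponential decay of the target state. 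Combining this decay with the norm equivalence of the previous step shows that $(\bar w_i,\tilde v_i)$ converges exponentially to $0$, which is the assertion of the theorem.
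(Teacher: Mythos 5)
Your proposal is correct and matches the paper's overall architecture (backstepping transformation, kernel well-posedness, cancellation of the boundary integrals by $U_0$, invertibility of the Volterra maps), but it proves stability of the target system \eqref{targ_1}--\eqref{targ_end} by a genuinely different argument. The paper does not use a Lyapunov functional: it applies the method of characteristics (following \cite{auriol2019mapping}) to collapse the four transport equations into a scalar difference equation with two delays for the junction trace,
\begin{align*}
\beta_2(0,t)={}&\exp\left(\tfrac{-L}{\tau_2 v_2^\star}\right)\tfrac{r_1-r_2}{(1-r_1)r_2}\,\beta_2(0,t-\kappa_2)\\
&+\exp\left(\tfrac{-L}{\tau_2 v_2^\star}\right)\exp\left(-\tfrac{L}{\tau_1 v_1^\star}\right)\beta_2(0,t-\kappa_1-\kappa_2),
\end{align*}
with $\kappa_i=\frac{1}{v_i^\star}+\frac{1}{\gamma_i p_i^\star - v_i^\star}$, then invokes Assumption~\ref{Assumption_main} as a spectral-radius condition guaranteeing exponential stability of this difference system, and finally propagates the decay of $\beta_2(0,\cdot)$ to the full state $(\alpha_i,\beta_i)$ through the transport structure before undoing the transformation. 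You instead use Assumption~\ref{Assumption_main} in its original role in \cite{coron2008dissipative}: choosing $\Delta\in\mathcal{D}_{4,+}$ with $\|\Delta H\Delta^{-1}\|<1$ for the boundary coupling matrix $H$ of \eqref{radius} (which backstepping leaves unchanged) is equivalent to the strict inequality $H^\top\Delta^2 H<\Delta^2$, and $\Delta^2$ supplies the weights of your exponentially weighted quadratic Lyapunov functional, with the boundary terms remaining dissipative for $\mu>0$ small enough so that $\dot V\le-\nu V$. Both uses of the assumption are legitimate, and each buys something: the paper's reduction to a difference system is the natural step in the delay-robustness framework it builds on (it exhibits the delays $\kappa_i$ and reflection coefficients explicitly and connects to characteristic-equation criteria that are necessary as well as sufficient), whereas your Lyapunov route is more self-contained and yields an explicit decay certificate and rate, at the cost of first recasting the two segments as a single $4\times 4$ transport system on a common interval, with each inflow trace expressed through outflow traces, so that the dissipative-boundary-conditions theorem applies verbatim.
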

\begin{proof}
 Using the method of characteristics (see \cite{auriol2019mapping} for details), it is possible to express the state~$\beta_2(L,t)$ as the solution of the difference equation
\begin{align*}
\beta_2(0,t)&= \exp\left(\frac{-L}{\tau_2 v_2^\star}\right) \frac{r_1-r_2}{(1-r_1)r_2}\beta_2(0,t-\kappa_2)\\
&+\exp\left(\frac{-L}{\tau_2 v_2^\star}\right)\exp\left(-\frac{L}{\tau_1 v_1^\star}\right)  \beta_2(0,t-\kappa_1-\kappa_2),
\end{align*}
where $\kappa_i=\frac{1}{v_i^\star}+\frac{1}{\gamma_ip_i^\star-v_i^\star}$. This difference system is exponentially stable due to Assumption~\ref{Assumption_main} (see \cite{auriol2019mapping}). Then, it implies that~$\beta_2(0,t)$ converges to zero. Using the transport structure of~\eqref{targ_1}-\eqref{targ_end} , we have the convergence of ($\alpha_i,\beta_i)$ to zero.
Due to the invertibility of the Volterra transformations~\eqref{Back_con0_1}-\eqref{Back_con0_2}, the systems \eqref{orig_1}-\eqref{orig_end} and \eqref{targ_1}-\eqref{targ_end} have equivalent stability properties. This implies the exponential stability of \eqref{orig_1}-\eqref{orig_end}.
\end{proof}

\section{Simulation results}
The length of each freeway segment is chosen to be $L= 2\; \rm{km}$ so the total length of the two connected segments are $4\; \rm{km}$. The maximum speed limit is $v_m= 45\; \rm m/s$. We consider six lanes for the upstream freeway segment 2. Assuming the average vehicle length is $5\; \rm{m}$ plus the minimum safety distance of $50\%$ vehicle length, the maximum density of the road is obtained as $\rho_{m,2}= 6/ 7.5 \;\rm vehicles /m = 800 \;\rm vehicles /km $. The downstream segment has five lanes thus its maximum density is $\rho_{m,1}= 666 \;\rm vehicles /km$. We take $\gamma_i=1$.  The steady states $(\rho_1^\star, v_1^\star)$ and $(\rho_2^\star, v_2^\star)$ are chosen respectively as $(456\;\rm vehicles /km,14\;m/s)$ and $(500 \;\rm vehicles /km,17 \;m/s)$, both of which are in the congested regime and satisfy \eqref{con_q} and \eqref{con_w}. The equilibrium steady state of the downstream road is more congested than the upstream road with higher density and lower velocity. The relaxation time of each segments are $\tau_1= 120 \; \rm s$ and $\tau_2=90 \; \rm s$. We use sinusoid initial conditions. The closed-loop simulation show that the exponential convergence to the steady states are achieved simultaneously for the upstream and downstream segments in in Fig ~\ref{close_rho} and Fig ~\ref{close_v}. The temporal evolutions of the traffic density and velocity states at junction are highlighted with red lines. 
\begin{figure}[t!]
	\includegraphics[width=9.6cm]{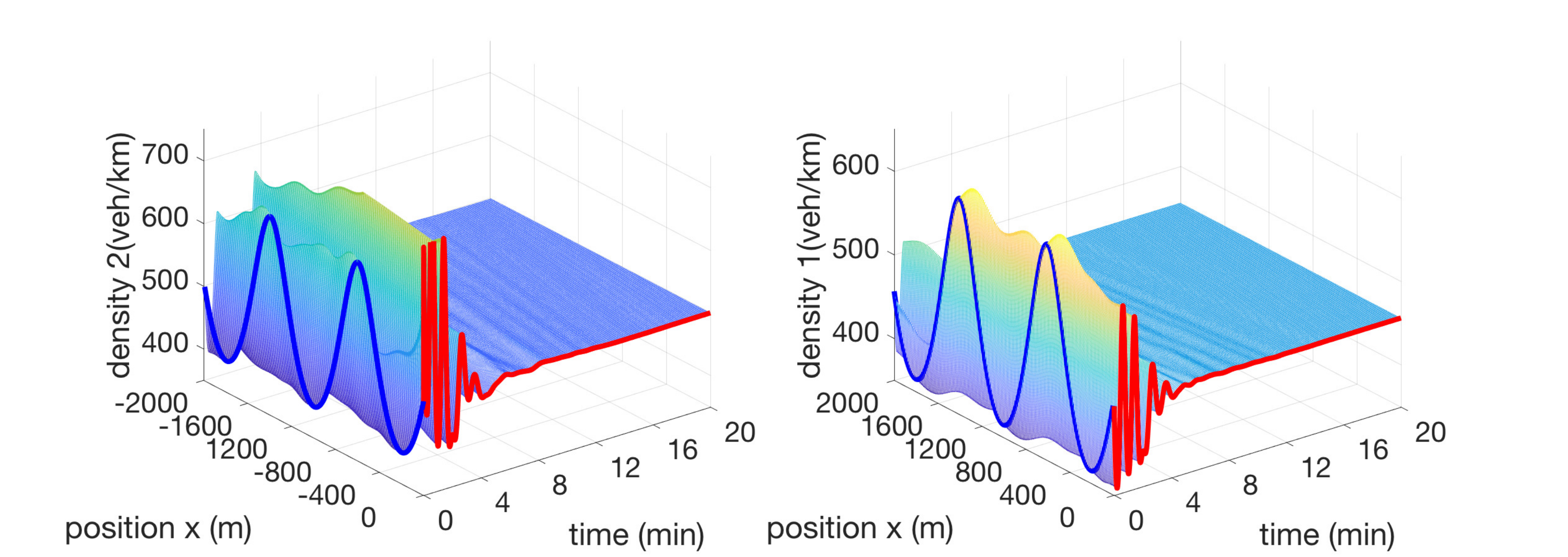}
	\centering
	\caption{Density evolution of upstream and downstream of the ramp metering}
	\label{close_rho}
\end{figure}

\begin{figure}[t!]
	\includegraphics[width=9.4cm]{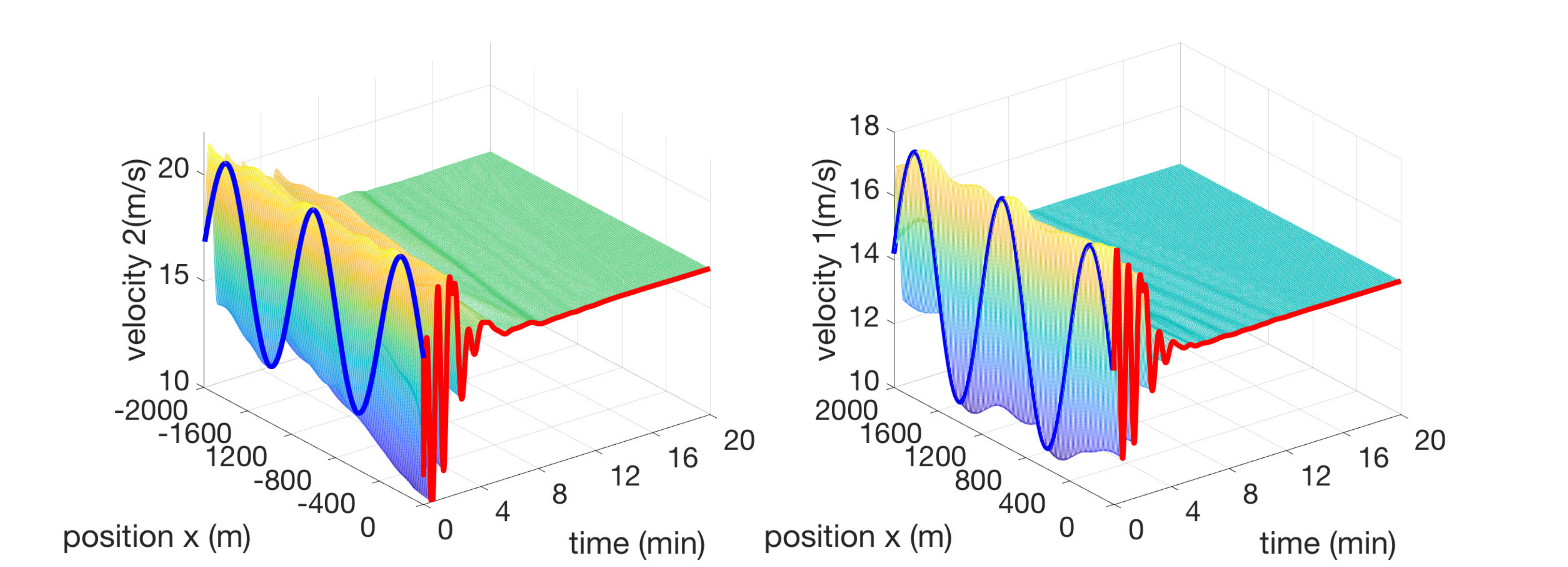}
	\centering
	\caption{Velocity evolution of upstream and downstream of the ramp metering}
	\label{close_v}
\end{figure} 

\section{Concluding remarks}
We design a stabilizing control law that guarantees the simultaneous stabilization of the traffic flow on two connected roads around given steady states. The flow actuation is realized with the ramp metering at the junction. Our approach is based on the backstepping methodology. This is a first step towards the stabilization of road networks. We will consider in future work the design of an observer (in view of output-feedback stabilization) for this class of system.

\bibliographystyle{plain}        

\end{document}